\documentclass[reqno]{article}
\usepackage{amsmath,amsfonts,amssymb,amsthm}

\newtheorem{theorem}{Theorem}
\newtheorem{lemma}[theorem]{Lemma}
\newtheorem{claim}[theorem]{Claim}
\theoremstyle{definition}
\newtheorem{remark}[theorem]{Remark}

\newcommand\eps{\varepsilon}
\renewcommand\le{\leqslant}
\renewcommand\ge{\geqslant}

\newcommand\E{{\mathbb E}}
\renewcommand\Pr{{\mathbb P}}
\newcommand\pto{\overset{\mathrm{p}}{\to}}
\newcommand\dtv[2]{\mathrm{d}_{\mathrm{TV}}\bigl(#1,#2\bigr)}

\newcommand\floor[1]{\lfloor #1 \rfloor}

\newcommand{\sgn}[1]{\mathrm{sgn}({#1})}

\newcommand{\NN}{\mathbb{N}}
\newcommand{\Qp}{{\mathbb Q^+}}
\newcommand{\Rp}{{\mathbb R^+}}
\newcommand{\RR}{\mathbb{R}}

\newcommand\cF{{\mathcal F}}
\newcommand\cD{{\mathcal D}}
\newcommand\cG{{\mathcal G}}
\newcommand\cK{{\mathcal K}}
\newcommand\cL{{\mathcal L}}
\newcommand\cR{{\mathcal R}}
\newcommand\cT{{\mathcal T}}
\newcommand\cU{{\mathcal U}}

\newcommand\fS{{\mathfrak S}}

\newcommand\bv{{\underline v}}
\newcommand\bc{{\underline c}}

\begin{document}
\title{Convergence of Achlioptas processes via differential equations with unique solutions}
\author{Oliver Riordan%
\thanks{Mathematical Institute, University of Oxford, 24--29 St Giles', Oxford OX1 3LB, UK.
E-mail: {\tt riordan@maths.ox.ac.uk}.}
\ and Lutz Warnke%
\thanks{Department of Pure Mathematics and Mathematical Statistics, University of Cambridge, 
Wilberforce Road, Cambridge CB3 0WB, UK.
E-mail: {\tt L.Warnke@dpmms.cam.ac.uk}.}}
\date{June 26, 2013}
\maketitle

\begin{abstract}
In Achlioptas processes, starting from an empty graph, in each step two 
potential edges are chosen uniformly at random, and using some rule one of 
them is selected and added to the evolving graph.  
The evolution of the rescaled size of the largest component in such variations 
of the Erd{\H o}s--R\'enyi random graph process has recently received 
considerable attention, in particular for Bollob\'as's `product rule'. 
In this paper we establish the following result for rules such as the product 
rule: the limit of the rescaled size of the `giant' component exists and is 
continuous provided that a certain system of differential equations has a 
unique solution. 
In fact, our result applies to a very large class of Achlioptas-like processes. 

Our proof relies on a general idea which relates the evolution of 
stochastic processes to an associated system of differential equations. 
Provided that the latter has a unique solution, our approach shows that certain 
discrete quantities converge (after appropriate rescaling) to this solution. 
\end{abstract}

\section{Introduction}
More than 50 years ago Erd\H os and R\'enyi initiated the systematic study of 
the \emph{random graph process}, which is the random sequence of graphs obtained 
by starting with an empty graph on $n$ vertices and then in each step adding a 
new random edge. Already in their seminal 1960 paper~\cite{ErdosRenyi1960} they  
investigated the size of the largest component in great detail. 
Suppressing, as usual, the dependence on $n$, let $L_1(m)$ denote the size of 
the largest component after $m$ steps. Their results imply, for example, that 
there is a continuous function $\rho=\rho^{\mathrm{ER}}:[0,\infty)\to [0,1)$ 
such that for any fixed $t\ge 0$ we have $L_1(\floor{tn})/n \pto \rho(t)$ as 
$n \to \infty$, where $\pto$ denotes convergence in probability. Nowadays the 
evolution of the component structure, in particular the size of the largest 
component, is one of the most studied properties in the theory of random 
graphs, see, e.g., the many references in~\cite{BJR,BR-RG}.  

In an attempt to create processes with potentially different behaviour, 
in 2000 Dimitris Achlioptas suggested certain variants of the classical random 
graph process (inspired by the `power of random choices' paradigm~\cite{ABKU}). 
These also start with an empty graph $G(0)$ on $n$ vertices.  
At each later step $m \ge 1$, two potential edges $e_1$ and $e_2$ are chosen 
independently and uniformly at random from all $\binom{n}{2}$ possible edges 
(or from those edges not present in $G(m-1)$). 
One of these edges is selected according to a rule $\cR$ and added to the graph, 
so $G(m)=G(m-1)\cup \{e\}$ for $e=e_1$ or $e_2$. Processes of this type are 
now known as \emph{Achlioptas processes}; always adding $e=e_1$ gives the 
Erd\H os--R\'enyi random graph process. 

During the last decade the evolution of the largest component in Achlioptas 
processes has received considerable attention. 
In one line of research the location (and existence) of the phase transition 
has been investigated, see e.g.~\cite{BF2001,BohmanKravitz2006,SpencerWormald2007}. 
This is motivated by Dimitris Achlioptas' original question, 
namely, whether the `freedom of choice' in each step can be used to 
substantially delay or accelerate the appearance of the linear size `giant' 
component. 
The above results answer this affirmatively by considering so-called 
`bounded-size' rules, whose decisions only depend on the sizes of the components  
containing the endvertices of $e_1$ and $e_2$, with the restriction that 
all sizes larger than some constant $B$ are treated the same way. 

A more recent direction of research concerns finer details of the phase
transition in Achlioptas processes, see 
e.g.~\cite{BBW2011,BBW2012,JansonSpencer2010,KPS2011}, investigating similarities 
and differences to the well-understood classical random graph process. 
In this context in particular the \emph{product rule} (suggested early on 
by Bollob\'as as the best rule to delay the phase transition) 
has received considerable attention: given two potential edges, it picks the 
one minimizing the product of the sizes of the components of its endvertices. 
Based on extensive simulations, Achlioptas, D'Souza and Spencer conjectured in 
\emph{Science}~\cite{Science2009} that the rescaled size of the largest 
component undergoes a discontinuous phase transition for the product rule, 
i.e., there exists a constant $\delta > 0$ so that $L_1(m)/n$ `jumps' from $o(1)$ 
to at least $\delta$ in $o(n)$ steps. Called \emph{explosive percolation}, 
this phenomenon has been of great interest to physicists, see 
e.g.~\cite{PhysRevE.82.042102,dCDGM2010,DSouzaMitzenmacher2010,PhysRevLett.103.255701,PhysRevE.81.036110,PhysRevLett.103.045701}. 
However, recently it has been rigorously shown 
in~\cite{AAP2011,Science2011} that the simulations were misleading, and that 
the phase transition is actually  continuous for \emph{all} Achlioptas 
processes. 

The discussion above, and much of the physics literature, takes an important 
question for granted: does the scaling limit even exist? More precisely, as 
in~\cite{AAP2011,Science2011} we say that a rule $\cR$ is {\em globally convergent} if 
there exists an increasing function $\rho=\rho^{\cR}:[0,\infty)\to [0,1]$ 
such that for any $t$ at which $\rho$ is continuous we have 
\begin{equation}\label{eq:gc}
 L_1(\floor{tn})/n  \pto \rho(t)
\end{equation}
as $n\to\infty$. The function $\rho=\rho^{\cR}$ is called the {\em scaling limit} of 
(the size of the giant component of) $\cR$.  
Writing $N_{k}(m)$ for the number of vertices of $G(m)$ in components with $k$ 
vertices, we call a rule $\cR$ {\em locally convergent} if there exist functions 
$\rho_k=\rho_k^{\cR}:[0,\infty)\to [0,1]$ such that, for each fixed $k\ge 1$ 
and $t\ge 0$, we have $N_{k}(\floor{tn})/n  \pto \rho_k(t)$ as $n\to\infty$. 
Spencer and Wormald~\cite{SpencerWormald2007} showed that all bounded-size 
rules are locally convergent, and conjectured that they are globally convergent. 
In~\cite{AAP2011} it was shown that global convergence follows from local 
convergence, settling this conjecture. 

For general size rules, the problem of 
establishing convergence (local and hence global) is still open, although 
there are partial results: in~\cite{ESAP} convergence was established up to 
the critical time $t_{\mathrm{b}}$ at which the `susceptibility' (the average 
size of the component containing a random vertex) diverges. 
According to Achlioptas, D'Souza and Spencer~\cite{Science2009}, complex  
rules such as the product rule seem to be `beyond the reach of current mathematical 
techniques', so it is not too surprising that for these no convergence results 
are known beyond $t_{\mathrm{b}}$, i.e., in the later evolution.  
Svante Janson~\cite{Science2011J} also remarks that most likely new 
methods are needed for understanding the detailed behaviour of such rules.

\subsection{Main result}\label{sec:main} 
In this paper we address the convergence question for Achlioptas processes: 
we show that rules such as the product rule are globally convergent 
(for all $t \in [0,\infty)$) 
provided that a certain associated system of differential equations 
(defined in Section~\ref{sec:DEM}) has a unique solution. Our result 
applies to a very large class of Achlioptas-like processes, including 
essentially all Achlioptas processes studied so far. For the definitions of 
$\ell$-vertex rule, merging and well-behaved see Section~\ref{sec:def}.
\begin{theorem}\label{thm:main}
Let $\ell \ge 2$ and let $\cR$ be a merging $\ell$-vertex rule that is well 
behaved.  Suppose the associated system of differential equations given 
by~\eqref{def:DEM:prop}--\eqref{def:DEM:init} has a unique solution 
$(\hat{\rho}_k(t))_{k \ge 1}$. Then $\cR$ is locally and globally convergent. 
In particular, for each fixed $k\ge 1$ and $t\ge 0$, we have 
\begin{equation}\label{eq:lc}
N_{k}(\floor{tn})/n \pto \hat{\rho}_k(t)
\end{equation} 
as $n\to\infty$. The scaling limit $\rho^{\cR}$ is continuous and satisfies 
$\rho^{\cR}(t)=1-\sum_{k\ge 1}\hat{\rho}_k(t)$. 
\end{theorem}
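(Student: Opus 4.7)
My plan is to adapt the classical differential-equation method (DEM) of Wormald, but in a form that requires only \emph{uniqueness} of the limiting ODE rather than a Lipschitz drift. The argument has four stages: concentration, tightness, identification of limit points, and extraction of the scaling limit.

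First I would establish concentration of $N_k(m)/n$ around its conditional mean. Since $\cR$ is an $\ell$-vertex rule, each step alters $N_k$ by $O(1)$ for every fixed $k$, so a standard Azuma--Hoeffding martingale argument bounds $\bigabs{N_k(m) - \E[N_k(m)]}$ by $O(n^{1/2}\log n)$ uniformly in $m \le Tn$ for each fixed $k$. Combined with a union bound over $k \le K$ this shows that, for any fixed $K$ and $T$, the rescaled trajectories $Y_k^{(n)}(t) := N_k(\floor{tn})/n$ for $k\le K$ are uniformly close to their expectations on $[0,T]$ with probability $1-o(1)$.

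Next I would prove tightness of the family $\{Y_k^{(n)}\}_{n}$ in $C([0,T])$ for each $k$. The functions $Y_k^{(n)}$ are bounded in $[0,1]$, and one step changes $Y_k^{(n)}$ by $O(1/n)$, so they are $O(1)$-Lipschitz in $t$; hence $\{Y_k^{(n)}\}$ is equicontinuous and, by Arzel\`a--Ascoli, relatively compact. A diagonal argument over $k \ge 1$ produces, along any subsequence of $n$, a further subsequence along which $Y_k^{(n)} \to \tilde\rho_k$ uniformly on compacts, for every $k \ge 1$ simultaneously.

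The heart of the proof is step three: I must show that any such subsequential limit $(\tilde\rho_k)_{k\ge 1}$ satisfies the ODE system \eqref{def:DEM:prop}--\eqref{def:DEM:init}. The initial condition is inherited from $G(0)$ being empty. For the drift, I would write the one-step expected change $\E[N_k(m+1)-N_k(m)\mid G(m)]$ as a function $F_k$ of the component-size profile $(N_j(m)/n)_{j\ge 1}$; the merging and well-behaved hypotheses should ensure that $F_k$ is continuous on the relevant simplex and that the contribution to the drift from components of size $>K$ vanishes uniformly as $K\to\infty$. Writing $Y_k^{(n)}(t) - Y_k^{(n)}(0)$ as a telescoping sum plus a martingale that tends to $0$ by stage one, and passing to the limit along the subsequence using dominated convergence and the well-behaved tail control, identifies $\tilde\rho_k$ as a solution of the associated ODE. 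The hypothesized uniqueness then forces $\tilde\rho_k = \hat\rho_k$ for every $k$, which, since all subsequential limits coincide, upgrades the convergence to convergence in probability, giving~\eqref{eq:lc} and local convergence.

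Finally, to deduce global convergence, I note that $\sum_{k\le K} N_k(\floor{tn})/n + L_1(\floor{tn})/n + R_K(t,n) = 1$, where $R_K(t,n)$ is the total mass in components of size in $(K, L_1)$. By local convergence the first sum tends to $\sum_{k\le K}\hat\rho_k(t)$, so it suffices to show $R_K(t,n) \pto 0$ as $n\to\infty$ followed by $K\to\infty$, which follows from the well-behaved tail bound and the fact that the limiting total mass $\sum_{k\ge 1}\hat\rho_k(t)$ is finite. Hence $L_1(\floor{tn})/n \pto 1 - \sum_{k\ge 1}\hat\rho_k(t) =: \rho^{\cR}(t)$; continuity of $\rho^{\cR}$ follows from continuity of each $\hat\rho_k$ together with uniform tail control in $t$ on compact intervals.

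The main obstacle I expect is stage three: verifying that the tails of the drift are uniformly negligible and that $F_k$ is continuous enough on the limiting profile so that the passage to the limit inside the telescoping sum is legitimate. Without a Lipschitz condition this is delicate, and it is precisely here that the \emph{well-behaved} hypothesis should do the work by supplying a quantitative control of the contribution of large components to each $F_k$.
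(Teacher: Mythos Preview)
Your overall architecture---concentration via Azuma--Hoeffding, compactness via Lipschitz bounds and a diagonal argument, identification of subsequential limits as ODE solutions, and then uniqueness forcing convergence---matches the paper's. The gap is in stage three, and it is exactly the one you flag, but your description of how the hypotheses resolve it is not right.

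You write that the well-behaved hypothesis should ensure ``the contribution to the drift from components of size $>K$ vanishes uniformly as $K\to\infty$''. This is false for general rules: once a giant component exists, it genuinely affects the drift of $N_k$ (for the product or sum rule, whether a small pair is joined depends on the sizes of the other two components, which may lie in the giant). The well-behaved condition~\eqref{eq:CE:Nk:change:WB} does not say large components are negligible; it says that once one component is much larger than the others, the drift may be computed by replacing its size by $\infty$. That is why the limiting ODE~\eqref{def:DEM} sums over $\NN\cup\{\infty\}$ with $\rho_\infty=1-\sum_k\rho_k$, not just over $\NN$.

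To make this work you need two further ingredients that your proposal does not mention. First, merging is used (via the event $\cU_n$ in the paper) to guarantee that with high probability there is at most \emph{one} component of size $\ge \eta(n)n$, so that ``the large component'' is well defined and condition~\eqref{eq:CE:Nk:change:WB} applies. Second, and more subtly, one needs a structural input (Theorem~2 of~\cite{AAP2011}, encoded as the event $\cK_n$) saying that for large $K$ almost all mass in components of size $\ge K$ lies in the single largest component: $N_{\ge K,m}\le L_{1,m}+\gamma n$. Without this, you cannot rule out that a nontrivial fraction of vertices sit in ``medium'' components of size between $S$ and $\gamma n$, where neither the truncation to $[S]$ nor the replacement by $\infty$ is valid. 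The paper's Lemma~\ref{l:wasclaim} combines these three inputs to show the conditional drift is $f_k(t)\pm\eps$; your sketch does not account for the medium range at all.

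The same issue recurs in stage four: your claim that $R_K(t,n)\pto 0$ ``follows from the well-behaved tail bound'' is not justified---well-behavedness is a condition on the drift, not on the component-size distribution. In the paper this step is not argued directly but delegated to Theorem~3 of~\cite{AAP2011}, which shows (again using merging and the $N_{\ge K}\approx L_1$ control) that local convergence implies global convergence with $\rho^{\cR}=1-\sum_k\hat\rho_k$ continuous.
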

\begin{remark}\label{rem:exists}
We shall show that under the conditions of the theorem, the system of
differential equations has at least one solution (see Lemma~\ref{lem:unique:ss}).
The key assumption of Theorem~\ref{thm:main} is that it does not have more than
one solution.
\end{remark}
\begin{remark}\label{rem:main}
If we only assume uniqueness on an interval $I=[0,t^*]$ or $I=[0,t^*)$, then the conclusions of 
Theorem~\ref{thm:main} hold for any $t \in I$. 
\end{remark}
The merging assumption in Theorem~\ref{thm:main} seems to be necessary (even 
for size rules): in~\cite{PRE2012} examples of `natural' non-merging rules 
are given where simulations strongly suggest that convergence fails. 
All rules for which convergence has been established are merging and well-behaved, 
including the classical Erd\H os--R\'enyi case~\cite{ErdosRenyi1960}, all bounded-size
rules~\cite{SpencerWormald2007} (such as the Bohman--Frieze 
rule~\cite{JansonSpencer2010}) as well as the dCDGM 
rule~\cite{dCDGM2010} and the adjacent edge rule~\cite{DSouzaMitzenmacher2010}. 
In fact, for all such rules there is a $K \ge 1$ such that each $\rho'_k$ 
in~\eqref{def:DEM} can be written as a function of 
$\rho_1, \ldots, \rho_{\max\{k,K\}}$. In this case the form of the differential 
equations~\eqref{def:DEM:prop}--\eqref{def:DEM:init} implies by standard results 
that its solution is unique. So Theorem~\ref{thm:main} generalizes these 
previous convergence results. 

Perhaps the main contribution of this paper is a new approach for proving convergence. 
Previous results in this area apply Wormald's `differential equation  
method'~\cite{Wormald1995DEM,Wormald1999DEM}, which is nowadays widely used in 
probabilistic combinatorics. This shows that under certain conditions, suitable 
sequences of random variables converge to the solution of a system of 
differential equations. The key point is that these conditions imply that 
the differential equations have a unique solution, but are not implied by this. 
By establishing a more direct connection between the random process and the 
differential equations, we only need to assume that the system of differential 
equations has a unique solution. Thus, our method is potentially applicable to a 
much larger class of Achlioptas processes. The general proof idea outlined in 
Section~\ref{sec:proof} might also be useful to establish convergence in other 
stochastic processes. 

In fact, our approach establishes more than convergence:
for each `typical' outcome, it shows that the evolution of suitable random 
variables follows \emph{some} solution of the associated system of 
differential equations. 
Hence our method allows us to transfer properties common to all solutions 
to the random process. 
We demonstrate the usefulness of this feature in Section~\ref{sec:tc}, 
where for Achlioptas processes we narrow the interval in which the giant component emerges. 

Theorem~\ref{thm:main} may be seen as a first step towards resolving the convergence 
question in Achlioptas processes. In particular, further investigation of the 
system of differential equations~\eqref{def:DEM:prop}--\eqref{def:DEM:init} 
associated to the product rule (and other complicated rules) seems to be needed: 
does it have a unique solution? 
When the equations do have a unique solution many questions remain; for example,   
which conditions are needed to establish asymptotic normality as 
in~\cite{Seierstad2009}?

In the next section we define the processes under consideration and state the 
system of differential equations associated to them. In Section~\ref{sec:proof} 
we first outline a general idea for proving convergence in stochastic processes, 
and then use this approach to establish our main result. 
Finally, in Section~\ref{sec:tc} we investigate the emergence of the giant 
component via properties of the different equations.

\section{Preliminaries and notation}\label{sec:def} 
Our core argument will involve considering sequences of points $\omega_n$ 
in different probability spaces. For this reason we indicate the dependence on 
$n$ explicitly in the notation. 
We now recall the relevant definitions from~\cite{AAP2011}.  
Fix $\ell\ge 2$. Each $\ell$-vertex rule $\cR$ yields for each $n$ a random 
sequence $(G_{n,m})_{m\ge 0}$ of graphs with vertex set $[n]=\{1, \ldots, n\}$, 
where $G_{n,0}$ is the empty graph. For each $m\ge 0$ we draw $\ell$ vertices 
$\bv_{n,m+1}=(v_{1},\ldots,v_{\ell})$ from $[n]$ independently and uniformly at 
random, and then obtain $G_{n,m+1}$ by adding a (possibly empty) set of edges 
$E_{n,m+1}$ to $G_{n,m}$, where $\cR$ selects $E_{n,m+1}$ as a subset of all pairs 
between vertices in $\bv_{n,m+1}$. To avoid `trivial' rules we require that 
$E_{n,m+1}\ne\emptyset$ if all $\ell$ vertices in $\bv_{n,m+1}$ are in distinct 
components of $G_{n,m}$ (it would also suffice that the conditional 
probability of this event is bounded away from $0$). 
Formally, we assume the existence of a sample space $\Omega_n$ and a filtration 
$\cF_{n,0}\subseteq \cF_{n,1}\subseteq\cdots$ such that $\bv_{n,m+1}$ is 
$\cF_{n,m+1}$-measurable and independent of $\cF_{n,m}$, and require $E_{n,m+1}$ 
(and hence $G_{n,m+1}$) to be $\cF_{n,m+1}$-measurable. 
For later use we let $\bc_{n,m+1}=(c_{1},\ldots,c_{\ell})$ denote the sizes of 
the components containing the chosen vertices 
$\bv_{n,m+1}=(v_{1},\ldots,v_{\ell})$ in $G_{n,m}$. 
We write $N_{n,k,m}$ for the number of vertices of $G_{n,m}$ in components of 
size $k$, and let $N_{n,\le k,m} = \sum_{1 \le j \le k} N_{n,j,m}$. We define 
$N_{n,\ge k,m}$ in an analogous way. 

For the purposes of this paper these definitions are robust with respect to 
small changes, since our arguments have $o(1)$ elbow room in each step of 
the process. So we may weaken the conditions on $\bv_{n,m+1}$: it suffices if, 
for $m=O(n)$, say, the conditional distribution of $\bv_{n,m+1}$ given 
$\cF_{n,m}$ is close to (at total variation distance $\alpha_n = o(1)$ from) 
the one defined above. 
This includes variations such as picking an $\ell$-tuple of distinct
vertices, or picking (the ends of) $\ell/2$ randomly selected (distinct) 
edges not already present in $G_{n,m}$, see~\cite{AAP2011}. 
Hence we may treat the original examples of Achlioptas as $4$-vertex rules 
where $\cR$ always selects one of the pairs $\{v_{1},v_{2}\},\{v_{3},v_{4}\}$; 
below we call such $\cR$ {\em Achlioptas rules}. 

We say that an $\ell$-vertex rule is {\em merging} if whenever
$C$, $C'$ are distinct components with $|C|,|C'|\ge \eps n$,
then in the next step we have probability at least $\eps^\ell$ of joining
$C$ to $C'$ (this can be slightly weakened, see~\cite{AAP2011}). 
In particular all Achlioptas rules are merging, since  with probability 
at least $\eps^4$ both potential pairs join $C$ to $C'$.

\subsection{Well behaved rules}\label{sec:wbr}
We say that an $\ell$-vertex rule $\cR$ is \emph{well behaved (at infinity)} 
if there are functions $d_{k}:(\NN\cup\{\infty\})^\ell \to \RR$ and $g:\NN\to\NN$
such that the following conditions hold: 
\begin{enumerate}
\item Whenever all vertices $v_j$ are in 
different components we have  
\begin{equation}\label{eq:CE:Nk:change}
\E(N_{n,k,m+1}-N_{n,k,m} \mid \cF_{n,m}, \bv_{n,m+1}) = d_{k}(c_1, \ldots, c_{\ell}) ,
\end{equation}
where $\bc_{n,m+1}=(c_{1},\ldots,c_{\ell})$ lists the sizes of the components 
containing the selected vertices.
\item Suppose there are $I \subseteq [\ell]$ and $S \ge k$ such that all $v_j$ 
with $j \in I$ are in the same component of size $c_j > g(S)$, whereas all other 
vertices are in different components with sizes $c_j \le S$. 
Whenever this holds we have 
\begin{equation}\label{eq:CE:Nk:change:WB}
\E(N_{n,k,m+1}-N_{n,k,m} \mid \cF_{n,m}, \bv_{n,m+1}) = d_{k}(\tilde{c}_1, \ldots, \tilde{c}_\ell) ,
\end{equation}
where $\tilde{c}_j = \infty$ for $j \in I$ and $\tilde{c}_j = c_j$ otherwise.  
\end{enumerate}
In fact, taking $I = \emptyset$ in~\eqref{eq:CE:Nk:change:WB} 
gives~\eqref{eq:CE:Nk:change}, but we note~\eqref{eq:CE:Nk:change} separately 
for clarity. 
As we shall discuss below, these conditions are very mild and hold for 
essentially all Achlioptas processes previously studied, including 
`unbounded rules' such as the sum and product rules. 
All rules which have been considered so far are \emph{size rules}, which 
only use $\bc_{n,m+1}$ to decide which edge(s) are added. 
For these the change of $N_{n,k,m}$ in~\eqref{eq:CE:Nk:change} is deterministic
given $\bc_{n,m+1}$, but considering the conditional expected 
change is slightly more general (we can also allow for small deviations 
in~\eqref{eq:CE:Nk:change} and~\eqref{eq:CE:Nk:change:WB}, but leave 
this to the interested reader). 
Intuitively, the second condition ensures that whenever one component is 
significantly larger than all others, then we can decide which relevant pairs 
are joined \emph{without} knowing its exact size (this fails, for example, if 
the change depends on the parity of $\floor{\log(\max_{j \in [\ell]} c_j)}$). This mild 
assumption holds for a large class of rules; for example, $g(s)=\max\{K,s\}$, 
$g(s)=\max\{B,s\}$, $g(s) = s^2$ and $g(s) = 2s$ suffice for nice rules as 
defined in~\cite{AAP2011}, bounded-size rules, the product rule and the sum 
rule, respectively. 
Note that since $N_{n,k,m}$ always changes by at most $\ell k$ per step, we 
have $|d_{k}(\cdot)| \le \ell k$.

\subsection{An associated system of differential equations}\label{sec:DEM}
Suppose that $\cR$ is a well-behaved $\ell$-vertex rule. 
In the following equations, each $\rho_k(t)$ is a function on $[0,\infty)$ 
satisfying
\begin{equation}\label{def:DEM:prop}
0 \le \rho_k(t) \le 1 
\qquad \text{and} \qquad
0 \le \sum_{k \ge 1} \rho_k(t) \le 1 .
\end{equation}
The system of differential equations associated to $\cR$ is given by 
\begin{equation}\label{def:DEM}
\rho_k'(t) = \sum_{c_1,\ldots,c_\ell \in \NN \cup \{\infty\}} d_{k}(c_1, \ldots, c_{\ell}) \prod_{j \in [\ell]} \rho_{c_j}(t) 
\end{equation}
for all $k \ge 1$, where 
\begin{equation}\label{def:DEM:rho}
\rho(t) = \rho_{\infty}(t) = 1-\sum_{k \ge 1} \rho_k(t) ,
\end{equation}
together with the initial conditions 
\begin{equation}\label{def:DEM:init}
	\rho_k(0) \; = \;	\begin{cases}
		1, & ~~\text{if $k=1$,}\\
		0, & ~~\text{otherwise.}
\end{cases}
\end{equation}
For $t=0$, the derivative in~\eqref{def:DEM} is taken to be the right-derivative. 
Note that for all $t \ge 0$ we have 
$|\rho_k'(t)| \le \max_{\underline{c}}|d_{k}(\underline{c})| \le \ell k$. 

As a basic example, consider the Erd\H os--R\'enyi random graph process, for 
which we have $d_{k}(c_1,c_2) \in \{-2k,-k,0,k\}$. It is not difficult to see 
that in this case~\eqref{def:DEM} simplifies to 
\begin{equation}\label{def:ER}
\rho_k'(t) = - 2k \rho_{k}(t) + k \sum_{c_1+c_2=k} \rho_{c_1}(t) \rho_{c_2}(t) ,
\end{equation}
which is a special case of \emph{Smoluchowski's coagulation equations} in a form
where sol-gel interaction is considered, see e.g.~\cite{Aldous1999,Norris1999} 
and the references therein. 
Here uniqueness follows easily from standard results, since $\rho_k'$ depends 
only on $\rho_1, \ldots, \rho_k$.

\section{Proof of the main result}\label{sec:proof}
We starting by outlining a rather general idea for proving convergence to the 
unique solution of a system of differential equations, which we shall later 
use to establish Theorem~\ref{thm:main}. 
We consider a discrete stochastic process with sample space 
$\Omega_n$ and filtration $\cF_{n,0} \subseteq \cF_{n,1} \subseteq \cdots$.  
For each (discrete) step $m$ we introduce (continuous) time $t=m/s_n$, where 
the scaling satisfies $s_n \to \infty$ as $n \to \infty$. 
Suppose our objective is to find a collection of random variables $X_{n,k,m}$ 
and (continuous) functions $x_k(t)$ together with (deterministic) scaling parameters $S_{n,k}$ 
such that for each fixed $k \ge 1$ and $t \ge 0$, we have 
\begin{equation*}\label{eq:xk:pconv}
X_{n,k,ts_n}/S_{n,k} \pto x_k(t) 
\end{equation*}
as $n \to \infty$, where we ignore the rounding to integers. 
The two main steps of our approach are as follows: 
\begin{enumerate}
	\item Defining the one-step change as $\Delta X_{n,k,m+1}= X_{n,k,m+1}-X_{n,k,m}$, 
we use martingale techniques (the Azuma--Hoeffding inequality together with an 
absolute bound on $|\Delta X_{n,k,m+1}|$) to show that, with probability tending 
to $1$ as $n\to\infty$, the following holds: for each fixed $k$ and all 
$m_1,m_2 \ge 0$ with $m_2-m_1 =O(s_n)$ we have 
\begin{equation}\label{eq:expch:diff}
X_{n,k,m_2}- X_{n,k,m_1}=  \sum_{m_1 \le m < m_2} \E(\Delta X_{n,k,m+1} \mid \cF_{n,m}) + o(S_{n,k}) .
\end{equation}
	\item Suppose we are given a sequence of sample points $\omega_n \in \Omega_n$, 
defined for some infinite set of $n\in \NN$, for which~\eqref{eq:expch:diff} 
and some additional technical conditions hold. Proceeding as in the proof of 
Helley's selection theorem (see, e.g., Theorem~5.8.1 in~\cite{Gut}), we pick a 
subsequence $(\omega_{\tilde{n}})$ such that for each $t \ge 0$ and $k \ge 1$, 
for some limiting value $x_k(t)$ we have 
\begin{equation}\label{eq:xk:conv}
X_{\tilde{n},k,ts_{\tilde{n}}}(\omega_{\tilde{n}})/S_{\tilde{n},k} \to x_k(t)
\end{equation}	
as $\tilde{n} \to \infty$ (here we exploit that each 
$X_{\tilde{n},k,ts_{\tilde{n}}}(\omega_{\tilde{n}})/S_{\tilde{n},k}$ 
satisfies a Lipschitz condition as a function of $t$). 
For this subsequence, we show that for all $t \ge 0$, $\eps > 0$ 
and $k \ge 1$ there exists $\delta > 0$ such that for $\tilde{n}$ large enough 
the following holds: for each $m \ge 0$ satisfying 
$|m-ts_{\tilde{n}}| \le \delta s_{\tilde{n}}$ we have 
\begin{equation}\label{eq:expch:X_k}
\E(\Delta X_{\tilde{n},k,m+1}\mid \cF_{\tilde{n},m})(\omega_{\tilde{n}}) = (f_k(t) \pm \eps) S_{\tilde{n},k}/s_{\tilde{n}} ,
\end{equation}
where $f_k(t)=f_k(t, x_1, x_2, \ldots)$ is a function of the scaling limits 
of the selected subsequence. 
To establish~\eqref{eq:expch:X_k} we combine coupling arguments with 
`typical' properties of the underlying stochastic process. 
\end{enumerate}
Now, using~\eqref{eq:expch:diff}--\eqref{eq:expch:X_k} it is straightforward 
to show that for all $t \ge 0$, $\eps > 0$ and $k \ge 1$ there exists $\delta > 0$ 
such that for all $0 < |h| \le \delta$ with $t+h \ge 0$ we have 
\begin{equation*}
\left| \frac{x_k(t+h)-x_k(t)}{h} - f_k(t) \right| \le \eps,
\end{equation*}
i.e., the $x_k$ satisfy the differential equation 
\begin{equation*}\label{eq:DE}
x'_k(t) =f_k(t, x_1, x_2, \ldots).
\end{equation*}
If the associated system of differential equations has a unique solution, 
then this implies that the limiting functions $x_k(t)$ in~\eqref{eq:xk:conv} 
do \emph{not} depend on the selected subsequence, which establishes the desired 
convergence. 
Finally, let us remark that by comparison with the underlying process we 
can (typically) derive additional properties of the $x_k$; 
it suffices to establish uniqueness of the solution to the system of 
differential equations augmented by these extra restrictions. 

In the remainder we use the above approach to establish 
Theorem~\ref{thm:main}. Aiming at $N_{n,k,tn}/n \pto \rho_k(t)$, we closely follow 
steps one and two in Sections~\ref{sec:proof:main} and~\ref{sec:proof:lemma}, 
respectively, with $X_{n,k,m}=N_{n,k,m}$, $x_k(t)=\rho_k(t)$, $S_{n,k}=n$ and 
$s_n=n$.

\subsection{Proof of Theorem~\ref{thm:main}}\label{sec:proof:main}
Our proof of Theorem~\ref{thm:main} relies on a technical lemma which 
requires some preparation. 
Set $\eta(n) =(\log\log\log n)^{-1}$, say. Let $\cU_n$ denote the event 
that at every step $m$ there is at most one component of size at least $\eta(n) n$. 
Since $\cR$ is merging, by the discussion following Theorem~2 in~\cite{AAP2011} 
we know that $\Pr(\cU_n) \to 1$ as $n \to \infty$. In the rest of the paper the 
particular form of $\eta(n)$ does not matter, only that it tends to $0$ as
$n \to \infty$. 
By Theorem~2 of~\cite{AAP2011}, for any constant $\gamma>0$ there is a constant $K(\gamma)$
such that
\[
 \Pr(\forall m: N_{n,\ge K(\gamma),m} < L_{1,n,m}+\gamma n) \to 1
\]
as $n\to\infty$, where $L_{1,n,m}$ is the number of vertices in the largest component of $G_{n,m}$. 
By a standard argument (considering, say, $\gamma=2^{-i}$ for each $i\in \NN$),
we may allow $\gamma$ to tend to zero at some
rate. More precisely, there exist functions $K(\gamma)$ and 
$\xi(n)$ with $\xi(n) \to 0$ as $n \to \infty$ such that, defining $\cK_n$ as 
the event that for all $m \ge 0$ we have 
\begin{equation}\label{eq:bound:L1:K}
\forall \gamma \ge \xi(n): \ N_{n,\ge K(\gamma),m} < L_{1,n,m} +\gamma n ,
\end{equation} 
we have $\Pr(\cK_n) \to 1$ as $n \to \infty$.

Fix $0 < \lambda < 1/4$, say $\lambda = 1/8$ for concreteness.   
For each $m \ge 0$ set
\[
 \Delta X_{n,k,m+1} = N_{n,k,m+1}-N_{n,k,m}
\]
and 
\[
 Y_{n,k,m+1} = \Delta X_{n,k,m+1} - \E(\Delta X_{n,k,m+1} \mid \cF_{n,m}).
\] 
Set 
\[ 
 Z_{n,k,j} = \sum_{0 \le m < j} Y_{n,k,m+1}.
\] 
Let $\cD_n$ denote the event that for all $1 \le k \le n^\lambda$ and 
$1 \le m_1 \le m_2 \le n^2$ with $m_2-m_1 \le n^{1+\lambda}$ we have 
$|Z_{n,k,m_2}-Z_{n,k,m_1}| < n^{1/2+2\lambda}$. Note that by rearranging 
terms, for all such $k,m_1,m_2$ the event $\cD_n$ implies 
\begin{equation}\label{eq:concentration:N}
N_{n,k,m_2} - N_{n,k,m_1} = \sum_{m_1 \le m < m_2} \E(\Delta X_{n,k,m+1} \mid \cF_{n,m}) \pm n^{1/2+2\lambda} . 
\end{equation}
Since the number of vertices in components of size $k$ 
changes by at most $\ell k$ per step, we have $|\Delta X_{n,k,m+1}| \le \ell k$
and thus 
$|Z_{n,k,m+1}-Z_{n,k,m}|=|Y_{n,k,m+1}| \le 2\ell k$. Furthermore 
$\E(Y_{n,k,m+1} \mid \cF_{n,m}) = 0$, so $(Z_{n,k,j})_{j\ge m_1}$ 
is a martingale. 
Thus, for fixed $k,m_1,m_2$ satisfying the conditions above, 
by the Azuma--Hoeffding inequality we have 
\[
\Pr(|Z_{n,k,m_2}-Z_{n,k,m_1}| \ge n^{1/2+2\lambda}) \le 2e^{-n^{3\lambda}/(8\ell^2k^2)} \le 2e^{-n^{\lambda}/(8\ell^2)} \le n^{-9} 
\]
for $n$ large enough. Taking a union bound (to account for all choices of 
$k,m_1,m_2$) yields $\Pr(\cD_n) \to 1$ as $n \to \infty$. 

Finally, define the `good' event $\cG_n = \cD_n \cap \cK_n \cap \cU_n$; we have
shown that $\Pr (\cG_n) \to 1$ as $n \to \infty$. We are now ready 
to state the main technical lemma. As usual, we ignore the irrelevant rounding 
to integers. 
\begin{lemma}\label{lem:unique:ss} 
Let $\ell \ge 2$ and let $\cR$ be a merging $\ell$-vertex rule that is well 
behaved. Let $(\omega_n)$ with $\omega_n \in \cG_n \subseteq \Omega_n$ be 
defined for an infinite set of $n\in \NN$. Then there exists a subsequence 
$(\omega_{\tilde{n}})$ of $(\omega_n)$ such that for each $t \ge 0$ and $k \ge 1$ we have 
$N_{\tilde{n},k,t \tilde{n}}(\omega_{\tilde{n}})/\tilde{n} \to \rho_k(t)$, 
where the $(\rho_k(t))_{k \ge 1}$ are functions on $\Rp$ satisfying the system 
of differential equations~\eqref{def:DEM:prop}--\eqref{def:DEM:init} associated 
to $\cR$. 
\end{lemma}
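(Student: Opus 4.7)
The plan is to implement the two-step strategy of Section~\ref{sec:proof} with $X_{n,k,m}=N_{n,k,m}$, $S_{n,k}=n$ and $s_n=n$. For each fixed $k$, the rescaled map $t \mapsto N_{n,k,tn}(\omega_n)/n$ takes values in $[0,1]$ and is $\ell k$-Lipschitz in $t$, since $|\Delta X_{n,k,m+1}| \le \ell k$ and one step corresponds to $\Delta t = 1/n$. Arzelà--Ascoli combined with a diagonal argument over $k \in \NN$ and an exhaustion of $\Rp$ produces a subsequence $(\omega_{\tilde n})$ along which $N_{\tilde n, k, t\tilde n}(\omega_{\tilde n})/\tilde n \to \rho_k(t)$ uniformly on compacta, for every $k \ge 1$. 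Each $\rho_k$ is therefore $\ell k$-Lipschitz, takes values in $[0,1]$, and inherits $\rho_1(0)=1$, $\rho_k(0)=0$ for $k\ge 2$, giving~\eqref{def:DEM:init}. Combining the identity $N_{\tilde n, \ge K(\gamma), m} = \tilde n - \sum_{k < K(\gamma)} N_{\tilde n, k, m}$ with the tail bound~\eqref{eq:bound:L1:K} from $\cK_{\tilde n}$ (valid for $\gamma > \xi(\tilde n)$), passing to the limit in $\tilde n$ and then letting $\gamma \to 0$, one identifies $L_{1, \tilde n, t\tilde n}(\omega_{\tilde n})/\tilde n \to \rho_\infty(t) := 1-\sum_k \rho_k(t)$ and in particular $\sum_{c > K(\gamma)} \rho_c(t) \le \gamma$, which yields the remaining bound $\sum_k \rho_k(t) \le 1$ in~\eqref{def:DEM:prop} and the relation~\eqref{def:DEM:rho}.

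\textbf{Expected one-step change.} The heart of the argument is identifying $\E(\Delta X_{\tilde n, k, m+1} \mid \cF_{\tilde n, m})(\omega_{\tilde n})$ for $m$ near $t \tilde n$. Fix $\gamma > 0$ small with $\gamma > \xi(\tilde n)$, and set $S = K(\gamma)$. I would call a draw $\bv_{\tilde n, m+1}$ \emph{nice} if (i) every $v_j$ lies either in a component of size at most $S$ or in the (unique, by $\cU_{\tilde n}$) component of size $\ge \eta(\tilde n)\tilde n$, which for large $\tilde n$ exceeds $g(S)$; and (ii) no two $v_j$'s lie in the same small component. The tail bound~\eqref{eq:bound:L1:K} gives a per-vertex probability at most $\gamma$ of violating (i), while (ii) fails with probability $O_{\ell,S}(1/\tilde n)$, so non-nice draws have total probability $\ell\gamma + o_{\tilde n}(1)$ and contribute at most $\ell k(\ell\gamma+o(1))$ to the conditional expectation (using $|d_k|\le \ell k$). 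On nice draws the well-behaved condition~\eqref{eq:CE:Nk:change:WB} applies with $I = \{j : v_j \text{ in the giant}\}$, yielding conditional expected change $d_k(\tilde c_1,\ldots,\tilde c_\ell)$, with $\tilde c_j = \infty$ for $j \in I$ and $\tilde c_j \le S$ otherwise.

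\textbf{Passing to the limit and deriving the ODE.} Since the $v_j$ are i.i.d.\ uniform on $[\tilde n]$, up to $O_{\ell,S}(1/\tilde n)$ coincidence corrections the types $(\tilde c_j)_{j\in[\ell]}$ are independent, with $\Pr(\tilde c_j = c) = N_{\tilde n, c, m}/\tilde n$ for $c \le S$ and $\Pr(\tilde c_j = \infty) = L_{1,\tilde n,m}/\tilde n$. Summing $d_k(\bc)$ weighted by these probabilities over all nice configurations, and using the convergences from Step~1 (uniform over $|m-t\tilde n| \le \delta \tilde n$ via Lipschitz continuity of each $\rho_c$), one obtains
\begin{equation*}
\E(\Delta X_{\tilde n, k, m+1} \mid \cF_{\tilde n, m})(\omega_{\tilde n}) = \sum_{\bc \in (\{1,\ldots,S\}\cup\{\infty\})^\ell} d_k(\bc) \prod_{j\in[\ell]} \rho_{c_j}(t) + O(\eps)
\end{equation*}
for $\tilde n$ large, $\gamma$ small, and $\delta$ small. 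The bound $|d_k|\le \ell k$ together with the tail estimate $\sum_{c > S}\rho_c(t)\le \gamma$ lets me take $\gamma \to 0$ (so $S\to\infty$) inside the sum, identifying the limit as the right-hand side $f_k(t)$ of~\eqref{def:DEM}. Plugging this into~\eqref{eq:concentration:N} (valid since $\omega_{\tilde n}\in \cD_{\tilde n}$), dividing by $\tilde n$, and noting $\tilde n^{-1/2+2\lambda} = o(1)$ since $\lambda=1/8$, I get $(\rho_k(t+h)-\rho_k(t))/h = f_k(t)+O(\eps)$ for $|h|$ small and then $\tilde n\to\infty$; letting $\eps \to 0$ yields $\rho_k'(t) = f_k(t)$, completing~\eqref{def:DEM}.

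\textbf{Main obstacle.} The delicate step is the truncation argument that links the empirical distribution of component sizes to the infinite sum in~\eqref{def:DEM}. The well-behaved clause~\eqref{eq:CE:Nk:change:WB} is crafted precisely to bundle the $\tilde n$-dependent giant into the $\infty$ symbol, but one must order the limits ($\tilde n \to \infty$, then $\delta \to 0$, then $\gamma \to 0$) carefully while juggling errors from non-nice draws, vertex coincidences, the tail beyond $S=K(\gamma)$, Lipschitz slack in $t$, and the martingale error inherited from $\cD_{\tilde n}$. The remaining ingredients---the Azuma concentration defining $\cD_n$, the diagonal extraction, and the basic Lipschitz estimate---are routine.
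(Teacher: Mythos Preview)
Your proposal is correct and follows essentially the same route as the paper: diagonal extraction of a subsequence (you use Arzel\`a--Ascoli where the paper uses a Helly-type argument on $\Qp$, a purely cosmetic difference), then identification of the conditional expected one-step change via the well-behaved condition and the truncation coming from $\cK_n\cap\cU_n$, and finally the passage to the ODE using $\cD_n$. Two tiny slips worth fixing: the well-behaved clause~\eqref{eq:CE:Nk:change:WB} requires $S\ge k$, so take $S=\max\{K(\gamma),k\}$ rather than $S=K(\gamma)$; and the per-vertex probability of landing in the intermediate range $(S,\eta(\tilde n)\tilde n)$ is bounded by $2\gamma$ (or $\gamma+\eta(\tilde n)$), not $\gamma$, when there is no component of size $\ge\eta(\tilde n)\tilde n$---this is harmless since it is still $O(\gamma)$.
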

Note that Lemma~\ref{lem:unique:ss} implies that the system of differential 
equations~\eqref{def:DEM:prop}--\eqref{def:DEM:init} has at least one solution. 
By comparison with the underlying process we can establish additional properties 
of the $\rho_k(t)$, e.g., that $\rho_{\le k}(t) = \sum_{1 \le j \le k} \rho_j(t)$ 
is monotone decreasing in $t$. 
Before giving the proof of Lemma~\ref{lem:unique:ss}, we first show how it implies 
Theorem~\ref{thm:main}. By Theorem~3 in~\cite{AAP2011} it suffices to 
establish~\eqref{eq:lc}, i.e., local convergence. 
Aiming at a contradiction, suppose there exists $\eps > 0$, ${t_{0}} \ge 0$, 
${k_{0}} \ge 1$ and an infinite set of $\bar{n} \in \NN$ such that 
$|N_{\bar{n},{k_{0}},{t_{0}}\bar{n}}/\bar{n} - \hat{\rho}_{{k_{0}}}({t_{0}})| > \eps$ 
holds with probability at least $\eps$, where $\hat{\rho}_{{k_{0}}}(t)$ 
is given by the (by assumption) unique solution 
to~\eqref{def:DEM:prop}--\eqref{def:DEM:init}. Since $\Pr (\cG_n) \to 1$ as 
$n \to \infty$, this implies (by discarding a finite number of elements in the 
beginning) that there exists an infinite sequence of sample points 
$(\omega_{\bar{n}})$ with 
$\omega_{\bar{n}} \in \cG_{\bar{n}} \subseteq \Omega_{\bar{n}}$ that satisfy  
\begin{equation}\label{eq:Nk:error}
|N_{\bar{n},{k_{0}},{t_{0}}\bar{n}}(\omega_{\bar{n}})/\bar{n} - \hat{\rho}_{{k_{0}}}({t_{0}})| > \eps . 
\end{equation}
Now Lemma~\ref{lem:unique:ss} gives a subsequence $(\omega_{\tilde{n}})$ 
satisfying $N_{\tilde{n},k,t \tilde{n}}(\omega_{\tilde{n}})/\tilde{n} \to \rho_k(t)$
for each $t \ge 0$ and $k \ge 1$, where the $(\rho_k(t))_{k \ge 1}$ 
solve~\eqref{def:DEM:prop}--\eqref{def:DEM:init} on $\Rp$. But 
now~\eqref{eq:Nk:error} implies $\rho_{{k_{0}}}(t_0) \neq \hat{\rho}_{{k_{0}}}(t_0)$, 
contradicting uniqueness.

\subsection{Proof of Lemma~\ref{lem:unique:ss}}\label{sec:proof:lemma}
We start by selecting a `nice' subsequence of $(\omega_n)$, proceeding as in 
the proof of Helley's selection theorem (see, e.g., Theorem~5.8.1 in~\cite{Gut}). 
Define $F_n(k,t) = N_{n,k,tn}(\omega_n)/n$ if $1 \le k \le n$, otherwise set 
$F_n(k,t) = 0$. Clearly, $F_n(k,t) \in [0,1]$. Furthermore, $F_n(1,0)=1$ and 
$F_n(k,0) = 0$ for $k \ge 2$. Let $(q_r)_{r \ge 1}$ be an enumeration of 
$\Qp$. 
A standard diagonal 
argument yields a subsequence $(\omega_{\tilde{n}})$ such that for all 
$(k,q_r) \in \NN \times \Qp$ the value of $F_{\tilde{n}}(k,q_r)$ converges to 
some limit $s_{k,q_r}$. For each $k \in \NN$ we now define 
$\rho_k(q_r) = s_{k,q_r}$ for all $q_r \in \Qp$. Since $N_{n,k,m}$ changes by 
at most $\ell k$ per step, as a function of $t$ each $F_n(k,t)$ is Lipschitz 
on $\Rp$ with constant $\ell k$, so $\rho_k$ has this property on $\Qp$. For 
each $k \in \NN$ we can thus extend $\rho_k$ to a Lipschitz continuous function 
on $\Rp$. Henceforth we always work with the subsequence selected above, but 
write $n$ instead of $\tilde{n}$ for ease of notation. For each
$t \in \Rp$ and $k \in \NN$ we then have
\begin{equation}\label{eq:rhok:conv}
N_{n,k,tn}(\omega_n)/n \to \rho_k(t).
\end{equation}
Turning to some basic properties of the $\rho_k(t)$, by counting vertices we see that
$0 \le \rho_k(t) \le 1$ and 
\begin{equation}\label{eq:rhok:sum}
0 \le \sum_{k \ge 1} \rho_k(t) \le 1 .
\end{equation}
Furthermore, the initial conditions $\rho_1(0)=1$ and $\rho_k(0)=0$ 
for $k \ge 2$ hold. 
To summarize, so far we have established~\eqref{def:DEM:prop} and~\eqref{def:DEM:init}. 

The following \emph{deterministic} statement is the main ingredient in the proof 
of Lemma~\ref{lem:unique:ss}. 
Recall that $\Delta X_{n,k,m+1} = N_{n,k,m+1}-N_{n,k,m}$. 
For brevity, we write $f_k(t)=f_k(t,\rho_1, \rho_2, \ldots)$ for the right 
hand side of~\eqref{def:DEM}. 
\begin{lemma}\label{l:wasclaim}
Let $\omega_n\in \cG_n\subseteq\Omega_n$ be defined for an infinite set of $n\in \NN$,
and suppose that \eqref{eq:rhok:conv} holds.
Then for all $t \ge 0$, $\eps > 0$ and $k \ge 1$ there  exists $0 < \delta \le 1$ 
such that for $n$ large enough the following holds: for each $m \ge 0$ 
satisfying $|m-tn| \le \delta n$ we have 
\begin{equation}\label{eq:exp:change:D_k}
\E(\Delta X_{n,k,m+1} \mid \cF_{n,m})(\omega_n) =  f_k(t) \pm \eps/3 . 
\end{equation}
\end{lemma}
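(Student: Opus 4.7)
The plan is to evaluate $\E(\Delta X_{n,k,m+1}\mid\cF_{n,m})(\omega_n)$ by averaging over the random choice $\bv_{n,m+1}$, using the well-behavedness property \eqref{eq:CE:Nk:change:WB} to compute the conditional expected change given $\bv_{n,m+1}$, and matching the result with a truncation of $f_k(t)$. First I would truncate $f_k(t)$: since $|d_k(\cdot)|\le\ell k$ and $\sum_{c\ge 1}\rho_c(t)\le 1$ by \eqref{eq:rhok:sum}, the contribution to $f_k(t)$ from tuples $(c_1,\ldots,c_\ell)\in(\NN\cup\{\infty\})^\ell$ with some $c_j\in\NN\setminus\{1,\ldots,S\}$ is bounded by $\ell^2 k\sum_{c>S,\,c\in\NN}\rho_c(t)$, which can be made at most $\eps/30$ by taking $S$ large. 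Enlarge $S$ so that $S\ge K(\gamma)$ for a small $\gamma>0$ to be fixed later, and note that for $n$ large we have $\eta(n)n>g(S)$, so on $\cU_n$ any component of size $\ge\eta(n)n$ has size $>g(S)$.

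Next I would classify the outcome of $\bv_{n,m+1}=(v_1,\ldots,v_\ell)$ on $\cG_n$. Let $I\subseteq[\ell]$ denote the set of $v_j$ lying in the (unique, if it exists) component of size $\ge\eta(n)n$, and call $\bv_{n,m+1}$ \emph{good} if every $v_j$ with $j\notin I$ lies in a component of size $c_j\le S$ and these components are pairwise distinct; otherwise call it \emph{bad}. On $\cG_n$ the conditional probability of bad is at most $\binom{\ell}{2}\eta(n)+\ell\gamma+O(S/n)$: collisions in non-giant components contribute at most $\sum_{C\ne\text{giant}}(|C|/n)^2\le\eta(n)$ by $\cU_n$, vertices in non-giant components of size $\ge K(\gamma)$ have total mass $<\gamma n$ by $\cK_n$, and collisions in distinct small components contribute $O(S/n)$ by a pair count. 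On good outcomes \eqref{eq:CE:Nk:change:WB} gives
\[
\E(\Delta X_{n,k,m+1}\mid\cF_{n,m},\bv_{n,m+1})(\omega_n)=d_k(\tilde c_1,\ldots,\tilde c_\ell)
\]
with $\tilde c_j=\infty$ for $j\in I$ and $\tilde c_j=c_j$ otherwise; on bad outcomes we use the trivial bound $|d_k|\le\ell k$. Averaging and dropping the distinctness constraint (at a cost $O(\ell k\,S/n)$),
\[
\E(\Delta X_{n,k,m+1}\mid\cF_{n,m})(\omega_n)=\sum_{I\subseteq[\ell]}\sum_{\substack{(c_j)_{j\notin I}\\ 1\le c_j\le S}}\Bigl(\frac{L_{1,n,m}}{n}\Bigr)^{|I|}\prod_{j\notin I}\frac{N_{n,c_j,m}}{n}\,d_k(\tilde c)+\mathrm{err}_1,
\]
where $|\mathrm{err}_1|=O\bigl(\ell k\,(\eta(n)+\gamma+S/n)\bigr)$ can be made $<\eps/10$ by choosing $\gamma$ small and $n$ large.

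Finally I would pass to the limit in the displayed formula. For each fixed $c\le S$, $N_{n,c,m}/n$ is $(\ell c)$-Lipschitz in $m/n$ while $\rho_c$ is Lipschitz on $\Rp$, so by \eqref{eq:rhok:conv} and a triangle inequality, $|N_{n,c,m}/n-\rho_c(t)|$ is uniformly small for $|m-tn|\le\delta n$ and $n$ large. To handle the giant, I would show $L_{1,n,m}/n\to\rho(t)$ along the subsequence by writing
\[
\frac{L_{1,n,m}}{n}=1-\sum_{1\le c\le S}\frac{N_{n,c,m}}{n}-\frac{N_{n,\ge S+1,m}-L_{1,n,m}}{n},
\]
using $\cK_n$ (with $S\ge K(\gamma)$) to bound the last term by $\gamma$, and comparing with $\rho(t)=1-\sum_{c\ge 1}\rho_c(t)$ together with the tail bound from Step~1. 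Substituting, the main sum converges to $f_k^S(t):=\sum_{(\tilde c)\in(\{1,\ldots,S\}\cup\{\infty\})^\ell}d_k(\tilde c)\prod_j\rho_{\tilde c_j}(t)$, and this differs from $f_k(t)$ by at most $\eps/30$ by Step~1. Putting everything together with $\gamma$ and $\delta$ chosen small enough and $n$ large yields $\E(\Delta X_{n,k,m+1}\mid\cF_{n,m})(\omega_n)=f_k(t)\pm\eps/3$.

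The main obstacle is the careful ordering of error parameters: $\eps$ first forces $S$ large through the truncation; then $\gamma$ (which enters via $K(\gamma)$ and hence again through $S$) must be small enough to render both the bad-configuration mass and the non-giant tail negligible; finally $\delta$ must be small enough that the $(\ell S)$-Lipschitz shifts from $tn$ to $m$ are absorbed. Conceptually the most delicate step is proving $L_{1,n,m}/n\to\rho(t)$ along the subsequence---this does \emph{not} follow from \eqref{eq:rhok:conv} alone but crucially uses $\cK_n$ to identify the graph's giant with the ``$c=\infty$'' mass in the differential equation.
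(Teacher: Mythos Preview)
Your proof is correct and follows essentially the same strategy as the paper: restrict to ``good'' vertex configurations where the well-behavedness hypothesis \eqref{eq:CE:Nk:change:WB} applies, bound the bad configurations using $\cU_n$ and $\cK_n$, and then compare the empirical component-size distribution with the limiting $(\rho_c(t))$ using \eqref{eq:rhok:conv} together with the Lipschitz property. The only organizational difference is that the paper packages the comparison step as a single total-variation bound between an empirical distribution $Y$ on $\NN\cup\{\infty\}$ and the limit distribution $Z$ given by $(\rho_c(t))_{c\in\NN}$ and $\rho(t)$, whereas you write out the finite sum over $I\subseteq[\ell]$ and $(c_j)_{j\notin I}\in[S]^{[\ell]\setminus I}$ explicitly and verify $N_{n,c,m}/n\to\rho_c(t)$ and $L_{1,n,m}/n\to\rho(t)$ separately; the content is the same.
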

\begin{proof}
Recall that $\omega_n \in \cK_n \cap \cU_n$ satisfies~\eqref{eq:rhok:conv}. 
Given $t\ge 0$, $\eps > 0$ and $k \ge 1$, pick $0<\gamma \le \eps/(50\ell^2 k)$. 
Recall that by definition $\rho(t) = 1-\sum_{k\ge 1}\rho_k(t) \in [0,1]$, see \eqref{def:DEM:rho} and \eqref{eq:rhok:sum}. 
Let $\rho_{< s}(t)= \sum_{1 \le k < s} \rho_k(t)$, which is increasing in $s$
with limit $1-\rho(t)$. 
Choose an integer $S \ge k$ such that $S \ge K(\gamma)$ and 
$\rho_{< S}(t) \ge 1 - \rho(t)-\gamma$, and let
$0<\delta \le \min\{\gamma/(9\ell S^2),1\}$. 
By choice of $S$ we have 
\begin{equation}\label{eq:bound:S:rho}
\rho_{\ge S}(t) = \sum_{k\ge S} \rho_k(t) \le \gamma .
\end{equation}
Consider $m \ge 0$ satisfying $|m-tn| \le \delta n$. 
Since $\eta(n) \le \gamma$ for $n$ large enough, 
using $\omega_n \in \cU_n$ we see that 
\begin{equation}\label{eq:bound:eta:N}
N_{n,\ge \gamma n,m}(\omega_n) > 0 \quad \text{ implies } \quad N_{n,\ge \gamma n,m}(\omega_n) = L_{1,n,m}(\omega_n)  .
\end{equation}
Furthermore, since $\omega_n \in \cK_n$ and $S \ge K(\gamma)$, by~\eqref{eq:bound:L1:K} we have  
$N_{n,\ge S,m}(\omega_n) \le L_{1,n,m}(\omega_n)+ \gamma n$ for $n$ large enough. 
So, by distinguishing whether $L_{1,n,m}(\omega_n)$ is 
larger or smaller than $\gamma n$, we infer
\begin{equation}\label{eq:bound:S:eta}
N_{n,\ge S,m}(\omega_n) - N_{n,\ge \gamma n,m}(\omega_n) \le 2\gamma n .
\end{equation}
We shall now evaluate $\E(\Delta X_{n,k,m+1} \mid \cF_{n,m})(\omega_n)$. 
For this we regard the graph $G_{n,m}(\omega_n)$ as fixed, and the vertices 
$\bv_{n,m+1}=(v_1, \ldots, v_\ell)$ as random. So, in the following all 
probabilities $\Pr(\ast)$ are shorthand for $\Pr(\ast \mid \cF_{n,m})(\omega_n)$. 
Recall the definitions of $\bv_{n,m+1}=(v_1, \ldots, v_\ell)$ and 
$\bc_{n,m+1}=(c_{1},\ldots,c_{\ell})$: the vertices $v_1,\ldots,v_\ell$ are 
chosen independently and uniformly at random from $[n]$, and $c_j$ denotes 
the size of the component in $G_{n,m}(\omega_n)$ containing $v_j$. 
So, for each $s \in [n]$ we have 
\begin{equation*}\label{eq:pr:cj}
\Pr(c_j = s) = N_{n,s,m}(\omega_n)/n .
\end{equation*}
We define $\cT$ as the event that (a) all vertices $v_j$ with $c_j \le S$ are 
in different components, and (b) there are no vertices $v_j$ with 
$S < c_j < \gamma n$. Let $g(\cdot)$ be the function
appearing in the definition \eqref{eq:CE:Nk:change:WB} of well behaved.
Clearly, $\max\{S,g(S)\} < \gamma n$ for $n$ large 
enough. Note that whenever 
$\cT$ holds, by~\eqref{eq:bound:eta:N} all $v_j$ in components of size larger 
than $g(S)$ are in the same component (the largest), so~\eqref{eq:CE:Nk:change} 
or~\eqref{eq:CE:Nk:change:WB} applies, giving 
\begin{equation}\label{eq:exp:change:v}
\E(\Delta X_{n,k,m+1} \mid \cF_{n,m}, \bv_{n,m+1}) = d_{k}(\tilde{c}_1, \ldots, \tilde{c}_\ell) ,
\end{equation}
where $\tilde{c}_j = \infty$ if $c_j \ge \gamma n$, and $\tilde{c}_j = c_j$ 
otherwise. Whether or not $\cT$ holds, the two sides of \eqref{eq:exp:change:v} are bounded by 
$\ell k$. Using~\eqref{eq:bound:S:eta} we see that 
$\Pr(\neg \cT) \le \ell^2 S/n + 2 \ell \gamma$, and so by choice of $\gamma$ we 
have 
\begin{equation*}\label{eq:exp:change:T:err}
2\ell k \cdot \Pr(\neg \cT) \le 2\ell k \cdot (\ell^2 S/n + 2 \ell \gamma) \le \eps/9 
\end{equation*}
for $n$ large enough.
Setting $\fS = [S] \cup \{s \in [n]: s \ge \gamma n\}$, by taking expectations of both sides 
of~\eqref{eq:exp:change:v}, it follows that 
\begin{equation}\label{eq:exp:change:1}
\E(\Delta X_{n,k,m+1} \mid \cF_{n,m})(\omega_n) = \sum_{s_1,\ldots,s_\ell \in \fS} d_{k}(\tilde{s}_1, \ldots, \tilde{s}_\ell) \prod_{j \in [\ell]} \Pr\bigl(c_j=s_j\bigr) \pm \eps/9 ,
\end{equation}
where $\tilde{s}_j = \infty$ if $s_j \ge \gamma n$, and $\tilde{s}_j = s_j$ 
otherwise.

Note that in the estimates above we had plenty of elbow room. So, if the 
conditional distribution of $\bv_{n,m+1}$ is at total variation distance 
$\alpha_n = o(1)$ from the one used above, then a simple coupling argument 
shows that this only adds an additive error of at most $2\ell k \alpha_n$, 
which is negligible for $n$ large enough, 
say at most $\eps/99$. So~\eqref{eq:exp:change:1} is easily seen to still hold in 
such slight variations. 

We define a distribution $Y$ as follows:  
\begin{equation*}\label{eq:def:Y:infty}
\Pr\bigl(Y = \infty\bigr) \; = \;	\begin{cases}
		L_{1,n,m}(\omega_n)/n, & ~~\text{if $L_{1,n,m}(\omega_n) \ge \gamma n$} ,\\
		0, & ~~\text{otherwise}  ,
	\end{cases}
\end{equation*}
and, for all $s \in \NN$,
\begin{equation}\label{eq:def:Y}
\Pr\bigl(Y = s\bigr) \; = \;	\begin{cases}
		N_{n,s,m}(\omega_n)/n, & ~~\text{if $s < \gamma n$} ,\\
		0, & ~~\text{otherwise}  .
	\end{cases}
\end{equation}
Note that by~\eqref{eq:bound:eta:N} this yields a probability distribution. 
Let $Y_1, \ldots, Y_\ell$ be iid with distribution $Y$ and observe that 
$\Pr(Y_j=s)=\Pr(c_j=s)$ for $s \le S < \gamma n$. Since by~\eqref{eq:bound:eta:N} there 
is at most one component of size at least $\gamma n$ (and $\tilde{s}_j = \infty$ 
for $s_j \ge \gamma n$), we see that~\eqref{eq:exp:change:1} gives  
\begin{equation}\label{eq:exp:change:2}
\E(\Delta X_{n,k,m+1} \mid \cF_{n,m})(\omega_n) = \sum_{s_1,\ldots,s_\ell \in [S] \cup \{\infty\}} d_{k}(s_1, \ldots, s_{\ell}) \prod_{j \in [\ell]} \Pr\bigl(Y_j=s_j\bigr) \pm \eps/9 .
\end{equation}
From \eqref{eq:bound:S:eta} and the definition of $Y$ we have $\Pr(S<Y<\infty)\le 2\gamma$.
Since $|d_{k}(\cdot)| \le \ell k$, we can extend the 
sum to all $s_1,\ldots,s_\ell \in \NN \cup \{\infty\}$ at the price of an 
additive error of $2\gamma \ell^2 k$. Since $2\gamma \ell^2 k \le \eps/20$ by 
choice of $\gamma$, this gives 
\begin{equation}\label{eq:exp:change:3}
\E(\Delta X_{n,k,m+1} \mid \cF_{n,m})(\omega_n) = \E(d_{k}(Y_1, \ldots, Y_{\ell})) \pm \eps/6 .
\end{equation}
For $s \le S$ note that $N_{n,s,m}$ changes by at most $\ell s \le \ell S$ in 
each step, so $|m-tn| \le \delta n$ implies 
$|N_{n,s,m}(\omega_n)-N_{n,s,tn}(\omega_n)| \le \ell S \delta n$.  
Hence, using the definition of $\delta$ and~\eqref{eq:rhok:conv},
for $s \le S$ and $n$ large enough we have
\begin{equation}\label{eq:Nsrho}
|N_{n,s,m}(\omega_n)/n - \rho_s(t)| \le \ell S \delta + \gamma/(2S) \le \gamma/S .
\end{equation}
Using this observation we shall 
now show that the right hand side of~\eqref{eq:exp:change:3} is essentially 
determined by the $(\rho_k(t))_{k \ge 1}$; this is key for our approach. 
To this end consider the distribution $Z$ which is defined as follows 
for every $s \in \NN \cup \{\infty\}$: 
\begin{equation}\label{eq:def:Z}
\Pr\bigl(Z = s\bigr) \; = \;	\begin{cases}
		\rho(t), & ~~\text{if $s = \infty$} ,\\
		\rho_s(t), & ~~\text{otherwise}  .
	\end{cases}
\end{equation}
\begin{claim}\label{cl:tvd:Y:Z}
For $n$ large enough we have 
\[ \dtv{Y}{Z} \le 4\gamma  . \]
\end{claim}
\begin{proof}
Recall that the total variation distance is given by 
\begin{equation}\label{eq:dtv:X}
\dtv{Y}{Z} = \frac{1}{2} \sum_{s \in \NN \cup \{\infty\}} \big|\Pr(Y=s)-\Pr(Z=s)\big|  . 
\end{equation}
For $s \le S$, note that~\eqref{eq:Nsrho} readily yields

\[
\sum_{s \in [S]} \big|\Pr(Y=s)-\Pr(Z=s)\big| \le \gamma
\]
for $n$ large enough. 
Next, we consider the summands where $s \in \NN \setminus [S]$. 
Recalling~\eqref{eq:def:Y} and~\eqref{eq:bound:S:eta}, we have
$\Pr\bigl(Y \in \NN \setminus [S]\bigr) \le 2\gamma$.
Similarly, from~\eqref{eq:def:Z} and~\eqref{eq:bound:S:rho} we have 
$\Pr\bigl(Z \in \NN \setminus [S]\bigr) \le \gamma$. Thus
\[
 \sum_{s \in \NN \setminus [S]} \big|\Pr(Y=s)-\Pr(Z=s)\big| \le 3\gamma  . 
\]
Finally, since $Y$ and $Z$ are probability distributions, they differ on 
$s = \infty$ no more than the sum of the differences of the other values, 
i.e., by at most $4\gamma$, and~\eqref{eq:dtv:X} follows.
\end{proof}
Taking $Z_1, \ldots, Z_\ell$ iid with distribution $Z$, using 
Claim~\ref{cl:tvd:Y:Z} the distributions of $(Y_1,\ldots,Y_\ell)$ and $(Z_1,\ldots,Z_\ell)$ can be coupled 
such that they agree with probability at least $1-4\ell\gamma$. 
So, since $|d_{k}(\cdot)| \le \ell k$, in~\eqref{eq:exp:change:3} we may 
replace all occurrences of $Y_j$ by $Z_j$ at the price of an additive error of 
$8\gamma \ell^2 k$. Since $8\gamma \ell^2 k \le \eps/6$ by choice of $\gamma$, 
it follows that
\begin{equation*}\label{eq:exp:change:f_k}
\E(\Delta X_{n,k,m+1} \mid \cF_{n,m})(\omega_n) = \E(d_{k}(Z_1, \ldots, Z_{\ell})) \pm \eps/3  .
\end{equation*}
The first term on the right hand side equals 
$f_k(t)=f_k(t,\rho_1, \rho_2, \ldots)$ by definition of the $Z_j$, 
see~\eqref{def:DEM} and~\eqref{eq:def:Z}. This 
establishes~\eqref{eq:exp:change:D_k} and thus completes the proof of 
Lemma~\ref{l:wasclaim}.
\end{proof}
Finally, with Lemma~\ref{l:wasclaim} in hand, we now complete the proof of 
Lemma~\ref{lem:unique:ss}. 
Given $t\ge 0$, $\eps > 0$ and $k \ge 1$, pick $0 < \delta \le 1$ as given by 
Lemma~\ref{l:wasclaim}. For each $0 < |h| \le \delta$ with $t+h \ge 0$ write 
$m_1,m_2$ for the minimum and maximum of $\{(t+h)n,tn\}$, which satisfy 
$m_1 \ge 0$ and $0 < m_2-m_1 < n^{1+\lambda}$. Recall that $\omega_n \in \cD_n$, 
and note that $k \le n^{\lambda}$ for $n$ large enough. Now,  
using~\eqref{eq:concentration:N} and~\eqref{eq:exp:change:D_k} we see that for 
$n$ large enough 
\begin{equation*}
\begin{split}
N_{n,k,(t+h)n}(\omega_n)-N_{n,k,tn}(\omega_n) &= \sgn{h} \cdot \sum_{m_1 \le m < m_2} \E(\Delta X_{n,k,m+1} \mid \cF_{n,m})(\omega_n) \pm n^{1/2+2\lambda}  \\
 &= hn \cdot (f_k(t) \pm \eps/3 ) \pm n^{1/2+2\lambda} .
\end{split}
\end{equation*}
Rearranging terms, using~\eqref{eq:rhok:conv} and $\lambda < 1/4$ we deduce 
that for $n$ large enough we have  
\begin{equation}\label{eq:rhok:der}
\left| \frac{\rho_k(t+h)-\rho_k(t)}{h} - f_k(t) \right| \le \eps/2 + n^{-1/2+2\lambda}/|h| \le \eps .
\end{equation}
To summarize, for all $t \ge 0$, $\eps > 0$ and $k \ge 1$ there exists 
$\delta > 0$ such that for all $0 < |h| \le \delta$ with $t+h \ge 0$ 
equation~\eqref{eq:rhok:der} holds for $n$ large enough. 
In other words, for $t > 0$ we have $\rho'_k(t) = f_k(t)$, 
which establishes~\eqref{def:DEM}. For $t=0$ we only considered 
$0 < h \le \delta$, so we proved the corresponding statement for the right 
derivative, and the proof of Lemma~\ref{lem:unique:ss} is complete.

\section{Emergence of the giant component}\label{sec:tc}
In this section we demonstrate that our approach may still yield useful 
information in the presence of multiple solutions (to the associated 
system of differential equations): using the emergence of the giant 
component as an example, we show that properties common to all solutions 
of the differential equations usually transfer to the discrete random process. 

We start by briefly recalling the strategy used in the proof of 
Theorem~\ref{thm:main}. Namely, we first defined events $\cG_n$ with 
$\Pr(\cG_n) \to 1$ as $n \to \infty$, and then showed that any sequence 
$(\omega_n)$ of `runs' of an Achlioptas process with $\omega_n \in \cG_n$ 
has a subsequence $(\omega_{\tilde{n}})$ where 
$\left(N_{\tilde{n},k,t\tilde{n}}(\omega_{\tilde{n}})/\tilde{n}\right)_{k \ge 1}$ 
converges to a solution $(\rho_k(t))_{k \ge 1}$ of the associated system of 
differential equations (also with $\rho(t)$ continuous). 
With this in mind, Remark~\ref{rem:main} follows, i.e., for any interval $I$ 
we obtain convergence to the (by assumption) unique solution 
$(\hat{\rho}_k(t))_{k \ge 1}$. 
This is important since it may well be that uniqueness for the system 
of differential equations can be established only up to some point; in 
particular, uniqueness after `gelation', i.e., when 
$\sum_{k \ge 1} \rho_k(t) <1$, seems to be much harder to establish.

In general, we do not know if there is a unique gelation point -- there might 
be a range. However, the following two theorems show that the giant component 
emerges at some point within this range (without assuming any uniqueness).  
\begin{theorem}\label{thm:tclb}
Let $\ell \ge 2$ and let $\cR$ be a merging $\ell$-vertex rule that is 
well behaved. 
Assume that for some $t^* \in [0,\infty)$ every solution 
$(\tilde{\rho}_k(t))_{k \ge 1}$ to the associated system of differential 
equations given by~\eqref{def:DEM:prop}--\eqref{def:DEM:init} satisfies 
$\sum_{k \ge 1} \tilde{\rho}_k(t^*)=1$. 
Then for any $0 \le t \le t^*$ we have $L_1(G_{tn}^{\cR})/n \pto 0$. 
\end{theorem}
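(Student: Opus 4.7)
The plan is to argue by contradiction. Fix $t \in [0,t^*]$ and suppose $L_{1,n,tn}/n \not\pto 0$. Then there exist $\eps > 0$ and an infinite set of $n \in \NN$ with $\Pr(L_{1,n,tn} \ge \eps n) \ge \eps$. Since $\Pr(\cG_n) \to 1$, for infinitely many $n$ we may pick $\omega_n \in \cG_n$ with $L_{1,n,tn}(\omega_n) \ge \eps n$. Applying Lemma~\ref{lem:unique:ss} to this sequence yields a subsequence $(\omega_{\tilde n})$ along which $N_{\tilde n, k, s \tilde n}(\omega_{\tilde n})/\tilde n \to \rho_k(s)$ for every $k \ge 1$ and $s \ge 0$, where $(\rho_k)_{k \ge 1}$ is a solution of \eqref{def:DEM:prop}--\eqref{def:DEM:init}. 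My aim is to derive a contradiction with the hypothesis that every such solution has $\sum_{k \ge 1} \rho_k(t^*)=1$.

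The first ingredient is a giant-mass lower bound at time $t$. Since $L_{1,n,tn}(\omega_n) \ge \eps n$, at least $\eps n$ vertices lie in components of size at least $\eps n$, so for each fixed $K$ and all $n$ with $\eps n \ge K$ we have $\sum_{k < K} N_{n,k,tn}(\omega_n) \le (1-\eps) n$. Passing to the limit along the chosen subsequence and then letting $K \to \infty$ gives
\begin{equation*}
  \sum_{k \ge 1} \rho_k(t) \;\le\; 1-\eps.
\end{equation*}

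The second ingredient is the \emph{deterministic} monotonicity of $N_{n, \le K, m}$ in $m$. When passing from $G_{n,m}$ to $G_{n,m+1}$ only edges are added, so components can merge but never split; hence a vertex whose component in $G_{n,m+1}$ has at most $K$ vertices already lay in a component of size at most $K$ in $G_{n,m}$. Thus $N_{n, \le K, tn}(\omega_{\tilde n}) \ge N_{n, \le K, t^* n}(\omega_{\tilde n})$ for all $\tilde n$. Taking limits yields $\sum_{j \le K} \rho_j(t) \ge \sum_{j \le K} \rho_j(t^*)$ for every $K$; letting $K \to \infty$ and invoking the hypothesis on $t^*$ gives
\begin{equation*}
  \sum_{k \ge 1} \rho_k(t) \;\ge\; \sum_{k \ge 1} \rho_k(t^*) \;=\; 1,
\end{equation*}
which contradicts the first ingredient and completes the proof.

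The only genuinely new ingredient beyond Lemma~\ref{lem:unique:ss} is this simple deterministic monotonicity; once it is in place, everything else is accounting. I do not anticipate any significant obstacle, since the heavy lifting (controlling the discrete process through the associated differential equations) is already packaged inside Lemma~\ref{lem:unique:ss}. Some minor care is needed to commute the limits in $n$ and $K$, but this is routine monotone-convergence bookkeeping.
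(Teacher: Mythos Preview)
Your proof is correct and follows essentially the same approach as the paper: argue by contradiction, select sample points in $\cG_n$ witnessing a large component, apply Lemma~\ref{lem:unique:ss} to extract a subsequence converging to a solution $(\rho_k)$ of the differential equations, and then derive a contradiction with $\sum_k \rho_k(t^*)=1$ by comparing $L_1$ with $N_{\le K}$. The only cosmetic difference is that the paper first invokes monotonicity of $L_1$ to reduce to the case $t=t^*$ and then works entirely at time $t^*$, whereas you keep $t\le t^*$ general and instead use the (equivalent) monotonicity of $N_{\le K}$ after passing to the limit; both variants rest on the same deterministic observation that adding edges cannot split components.
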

\begin{proof}
By monotonicity it suffices to show that $L_1(G_{t^*n}^{\cR})/n \pto 0$. 
Recall that in the proofs we indicate the dependence on $n$ explicitly writing,
for example, $L_{1,n,t^*n}$. Proceeding along the lines of the proof of Theorem~\ref{thm:main}, 
suppose there exists $\delta > 0$ and an infinite set of $\bar{n} \in \NN$ 
with $\Pr(L_{1,\bar{n},t^*\bar{n}}/\bar{n} \ge \delta) \ge \delta$. 
Then, since $\Pr (\cG_n) \to 1$ as $n \to \infty$, there exists an infinite 
sequence of sample points $(\omega_{\bar{n}})$ with 
$\omega_{\bar{n}} \in \cG_{\bar{n}} \subseteq \Omega_{\bar{n}}$ and 
\begin{equation}\label{eq:thm:tclb:L1}
L_{1,\bar{n},t^*\bar{n}}(\omega_{\bar{n}})/\bar{n} \ge \delta . 
\end{equation}
Now Lemma~\ref{lem:unique:ss} gives a subsequence $(\omega_{\tilde{n}})$ 
with $N_{\tilde{n},k,t \tilde{n}}(\omega_{\tilde{n}})/\tilde{n} \to \rho_k(t)$ 
for each $t \ge 0$ and $k \ge 1$, where the $(\rho_k(t))_{k \ge 1}$ 
solve~\eqref{def:DEM:prop}--\eqref{def:DEM:init}. Hence, by 
assumption we have $\sum_{k \ge 1}\rho_k(t^*)=1$, and so for some $K$
we have $\sum_{1 \le k \le K} \rho_k(t^*) \ge 1-\delta/4$. 
Since $L_{1,\tilde{n},m} \le \max\{N_{\tilde{n},\ge K+1,m},K\} \le \tilde{n} - N_{\tilde{n},\le K,m}+K$, 
for $\tilde{n}$ sufficiently large we infer 
\begin{equation}\label{eq:L1:eps}
\begin{split}
L_{1,\tilde{n},t^*\tilde{n}}(\omega_{\tilde{n}})/\tilde{n} &\le 1-N_{\tilde{n},\le K,t^*\tilde{n}}(\omega_{\tilde{n}})/\tilde{n}+K/\tilde{n} \\
& \le 1-  \sum_{1 \le k \le K} \rho_k(t^*) + \delta/4 \le \delta/2 ,  
\end{split}
\end{equation}
contradicting \eqref{eq:thm:tclb:L1}. 
\end{proof}

Our next result gives conditions sufficient to guarantee the emergence of a linear size 
component. The main assumption will be that every solution to the differential equations
has $\rho(t^*)>0$ (i.e., $\sum_{k \ge 1} \rho_k(t^*)<1$). This can be restated as the non-existence
of a solution with $\rho(t^*)=0$; when effectively (as here) imposing the condition
$\rho(t^*)=0$, we may simplify the equations, replacing \eqref{def:DEM} by
\begin{equation}\label{def:DEM:simple}
\rho_k'(t) = \sum_{c_1,\ldots,c_\ell \in \NN} d_{k}(c_1, \ldots, c_{\ell}) \prod_{j \in [\ell]} \rho_{c_j}(t). 
\end{equation}
This generalizes the Smoluchowksi coagulation equations (see e.g.~\cite{Aldous1999,Norris1999})
in a form without sol-gel interaction. The advantage is that it allows
us to drop condition \eqref{eq:CE:Nk:change:WB}.

In the following result $\alpha$ is allowed to depend on $\eps$; this seems  
necessary for rules where the largest component has random size, i.e., which 
are not convergent. 
\begin{theorem}\label{thm:tcub}
Let $\ell \ge 2$ and let $\cR$ be a merging $\ell$-vertex rule that satisfies 
assumption~\eqref{eq:CE:Nk:change}.  
Assume that for $t^* \in [0,\infty)$ every solution $(\tilde{\rho}_k(t))_{k \ge 1}$ 
on $[0,t^*]$ to the associated system of differential equations given 
by~\eqref{def:DEM:prop}, \eqref{def:DEM:init} and \eqref{def:DEM:simple} 
satisfies 
$\sum_{k \ge 1} \tilde{\rho}_k(t^*)< 1$. 
Then for any $t^* \le t < \infty$ and $\eps > 0$ there exist $\alpha,n_0 > 0$ such 
that $\Pr(L_1(G_{tn}^{\cR}) \ge \alpha n) \ge 1-\eps$ for all $n \ge n_0$. 
\end{theorem}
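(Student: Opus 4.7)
The plan is to argue by contradiction, along the lines of the proof of Theorem~\ref{thm:tclb} but in the reverse direction. By monotonicity of $L_{1,n,m}$ in $m$, it suffices to handle $t=t^*$: if the conclusion fails there, a diagonal argument yields $\eps>0$, $\alpha_j\downarrow 0$ and $n_j\to\infty$ with $\Pr(L_{1,n_j,t^*n_j}<\alpha_j n_j)>\eps$. Combining this with $\Pr(\cG_n)\to 1$ produces sample points $\omega_{n_j}\in\cG_{n_j}$ such that $L_{1,n_j,t^*n_j}(\omega_{n_j})/n_j\to 0$.

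I would then apply the Helley-type selection from the proof of Lemma~\ref{lem:unique:ss} (which uses only the Lipschitz control on the functions $F_n(k,\cdot)$, and not well-behavedness) to pass to a further subsequence, still denoted $(n_j)$, along which $N_{n_j,k,tn_j}(\omega_{n_j})/n_j\to\rho_k(t)$ for every $t\ge 0$ and $k\ge 1$; this gives Lipschitz functions $(\rho_k)_{k\ge 1}$ on $\Rp$ obeying \eqref{def:DEM:prop} and \eqref{def:DEM:init}. Next I would show $\rho(t):=1-\sum_{k\ge 1}\rho_k(t)=0$ on $[0,t^*]$. Given $t\in[0,t^*]$ and $\gamma>0$, monotonicity of $L_{1,n_j,\cdot}$ in $m$ gives $L_{1,n_j,tn_j}(\omega_{n_j})/n_j\to 0$; combined with $\omega_{n_j}\in\cK_{n_j}$ and $K\ge K(\gamma)$ this yields $N_{n_j,\ge K,tn_j}(\omega_{n_j})/n_j\le L_{1,n_j,tn_j}(\omega_{n_j})/n_j+\gamma$, so letting $n_j\to\infty$ and then $\gamma\to 0$ forces $\sum_{k\ge 1}\rho_k(t)=1$.

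The heart of the argument is to verify that $(\rho_k)$ also satisfies the simplified equation \eqref{def:DEM:simple} on $[0,t^*]$, despite only having hypothesis \eqref{eq:CE:Nk:change} and not the full well-behavedness condition \eqref{eq:CE:Nk:change:WB}. The plan is to reprove Lemma~\ref{l:wasclaim} with a strengthened event: redefine $\cT$ to require both that all $v_j$ lie in \emph{distinct} components and that \emph{every} $c_j\le S$. Under this stronger $\cT$ the identity \eqref{eq:CE:Nk:change} applies directly, so \eqref{eq:CE:Nk:change:WB} is never invoked. The bound $\Pr(\neg\cT)=o(1)+O(\gamma)$ still holds: the chance that some $v_j$ lies in a component of size exceeding $S\ge K(\gamma)$ is at most $L_{1,n_j,m}(\omega_{n_j})/n_j+\gamma=o(1)+\gamma$ per vertex by $\cK_{n_j}$, while the contribution from two $v_j$ sharing a component is $O(\ell^2 S/n_j)+O((L_{1,n_j,m}/n_j)^2)$. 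The rest of the proof of Lemma~\ref{l:wasclaim} --- defining $Y$ and $Z$, the total variation bound of Claim~\ref{cl:tvd:Y:Z}, the coupling, Azuma concentration on $\cD_{n_j}$, and passing from one-step expectations to differentiating $\rho_k$ --- then transfers verbatim; since $\rho(t)=0$ on $[0,t^*]$, the limiting equation is precisely \eqref{def:DEM:simple}. This exhibits a solution of \eqref{def:DEM:prop}, \eqref{def:DEM:init} and \eqref{def:DEM:simple} on $[0,t^*]$ with $\sum_{k\ge 1}\rho_k(t^*)=1$, contradicting the hypothesis.

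The main obstacle I anticipate is precisely this adaptation of Lemma~\ref{l:wasclaim} so as to avoid invoking \eqref{eq:CE:Nk:change:WB}; the saving grace is that on the hypothetical bad sample path one has $L_1/n\to 0$ throughout $[0,t^*]$, so the problematic ``very large component'' regime never arises and \eqref{eq:CE:Nk:change} alone suffices. Beyond this the argument is a combination of monotonicity and the estimates already developed in the proofs of Lemma~\ref{lem:unique:ss} and Theorem~\ref{thm:tclb}.
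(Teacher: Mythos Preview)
Your proposal is correct and follows essentially the same approach as the paper's proof: both argue by contradiction, extract sample points $\omega_n\in\cG_n$ with $L_{1,n,t^*n}(\omega_n)/n\to 0$, pass to a convergent subsequence via the Helly-type selection, and then adapt Lemma~\ref{l:wasclaim} so that only assumption~\eqref{eq:CE:Nk:change} is needed, exploiting that on these sample paths no large component exists. The only differences are organizational: the paper keeps the original event $\cT$ and shows that since $L_{1,n,m}(\omega_n)<\gamma n$ the ``large component'' case of $\cT$ is vacuous (handling $m$ up to $(t^*+\delta)n$ via an increment bound on $N_{\ge S}$), whereas you redefine $\cT$ to require all $c_j\le S$; and the paper sets $d_k\equiv 0$ on arguments containing $\infty$ to make \eqref{def:DEM:simple} coincide with \eqref{def:DEM}, only establishing $\sum_k\rho_k(t^*)=1$ at the very end, whereas you first prove $\rho(t)=0$ on all of $[0,t^*]$ and use this to simplify the limiting equation.
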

\begin{proof}
By monotonicity it suffices to establish the claim for $t=t^*$. 
Aiming at a contradiction, suppose there exists $\eps > 0$ such that for 
all $\alpha,n_0>0$ we have $\Pr(L_{1,n,t^*n} \le \alpha n) \ge \eps$ 
for some $n \ge n_0$. 
It follows as usual that there is $\hat{\alpha}(n) \to 0$ as $n \to \infty$ 
and an infinite set of $\bar{n} \in \NN$ such that 
$\Pr(L_{1,\bar{n},t^*\bar{n}} \le \hat{\alpha}(\bar{n}) \bar{n}) \ge \eps$. 
Define $\cL_n$ as the event that $L_{1,n,t^*n} \le \hat{\alpha}(n) n$. 
Since $\Pr (\cG_n) \to 1$ as $n \to \infty$ there exists an infinite 
sequence of sample points $(\omega_{\bar{n}})$ with 
$\omega_{\bar{n}} \in \cG_{\bar{n}} \cap \cL_{\bar{n}} \subseteq \Omega_{\bar{n}}$, 
for which we now prove the following variant of Lemma~\ref{lem:unique:ss}. 
\begin{claim}\label{cl:unique:mod}
There is a subsequence $(\omega_{\tilde{n}})$ of $(\omega_{\bar{n}})$ such that for each 
$0 \le t \le t^*$ and $k \ge 1$ we have 
\begin{equation}\label{eq:cl:unique:mod:Nk}
N_{\tilde{n},k,t \tilde{n}}(\omega_{\tilde{n}})/\tilde{n} \to \rho_k(t),
\end{equation}
where the $(\rho_k(t))_{k \ge 1}$ are functions satisfying the 
system of differential equations~\eqref{def:DEM:prop}, \eqref{def:DEM:init}, 
\eqref{def:DEM:simple} on $[0,t^*]$.  
\end{claim}
\begin{proof}
Defining $d_k(c_1, \ldots, c_\ell)=0$ if any argument is infinite, note that 
\eqref{def:DEM:simple} equals \eqref{def:DEM}. So, in view 
of Section~\ref{sec:proof:lemma}, it suffices to prove Lemma~\ref{l:wasclaim} for 
$0 \le t \le t^*$. We closely follow the original argument, only 
changing some minor details (we also write $n$ instead of $\tilde{n}$ for ease of 
notation). When selecting the parameters $\gamma$, $S$, $\delta$ 
we use $S \ge K(\gamma/9)$ instead of $S \ge K(\gamma)$. 
Observe that $N_{n,\ge S,m}$ increases by at most $\ell S$ in each step, so that 
$N_{n,\ge S,(t^*+\delta)n} \le N_{n,\ge S,t^*n} + \ell \delta S n$, where 
$\ell \delta S \le \gamma/9$ by choice of $\delta$. 
Since $\omega_n \in \cK_n \cap \cL_n$ and $S \ge K(\gamma/9)$, we have 
\begin{equation}\label{eq:NgeS}
 N_{n,\ge S,t^*n}(\omega_n) \le L_{1,n,t^*n}(\omega_n)+ \gamma n/9 \le (\hat{\alpha}(n)+\gamma/9) n \le \gamma n/3,
\end{equation}
for $n$ large enough. For $t\le t^*$, combining these estimates with monotonicity, we
deduce that for $n$ sufficiently large we have
\[
 L_{1,n,m}(\omega_n) \le L_{1,n,(t^*+\delta)n}(\omega_n) \le \max\{N_{n,\ge S,(t^*+\delta)n}(\omega_n) ,S\} \le \gamma n/2 
\]
for every $m \ge 0$ with $|m-tn| \le \delta$.

When establishing \eqref{eq:exp:change:v} the assumption~\eqref{eq:CE:Nk:change} 
thus always applies (whenever the event $\cT$ holds all vertices are in different 
components and satisfy $c_j \le S$). 
Consequently \eqref{eq:exp:change:1} holds, since $d_k(c_1, \ldots, c_\ell)=0$ 
if $\infty \in \{c_1, \ldots, c_{\ell}\}$. Now the remainder of the argument 
leading to Lemma~\ref{l:wasclaim} is unchanged, which, as discussed, completes 
the proof of Claim~\ref{cl:unique:mod}. 
\end{proof}
Now consider a subsequence $(\omega_{\tilde{n}})$ with the properties 
guaranteed by Claim~\ref{cl:unique:mod}. From \eqref{eq:NgeS}, for $n$ large we have
\[
 N_{\tilde{n},\le S,t^*\tilde{n}}(\omega_{\tilde{n}}) \ge
 n-  N_{\tilde{n},\ge S,t^*\tilde{n}}(\omega_{\tilde{n}}) \ge (1-\gamma/3) n,
\]
so from \eqref{eq:cl:unique:mod:Nk} it follows that
\[
 \sum_{k\ge 1}\rho_k(t^*) \ge \sum_{1 \le k \le S} \rho_k(t^*) \ge 1-\gamma.
\]
Since we could choose the constant $\gamma$ arbitrarily small, we have $\sum_{k\ge 1}\rho_k(t^*)\ge 1$.
Since $(\rho_k(t))_{k \ge 1}$ is a solution to \eqref{def:DEM:prop}, \eqref{def:DEM:init} and
\eqref{def:DEM:simple} on $[0,t^*]$, this contradicts the assumptions of the theorem.
\end{proof}

\begin{remark}
We may replace \eqref{def:DEM:simple} by \eqref{def:DEM} in the assumptions of Theorem~\ref{thm:tcub}. 
Indeed, using \eqref{eq:rhok:sum}, the $(\rho_k(t))_{k \ge 1}$ constructed above satisfy $\sum_{k \ge 1} \rho_k(t)=1$ for $0 \le t \le t^*$. 
Thus they also solve \eqref{def:DEM}, since $\rho(t)=0$ for $0 \le t \le t^*$. 
\end{remark}

\begin{remark}\label{rem:tcub}
Theorem~\ref{thm:tcub} also holds without the merging assumption; we outline
the minor modifications needed to the proof.
Using Remark~9 in~\cite{AAP2011}, we
replace `at most one component' by `at most $\ell-1$ components' in the 
definition of $\cU_n$, and replace $L_{1,n,m}$ by $L_{n,m}$ in the 
definition of $\cK_n$, where $L_{n,m}$ denotes the sum of the sizes of the 
$\ell-1$ largest components. 
Now, thinking of all `infinity' terms as the probability of being in one of 
the $\ell-1$ largest components, using $L_{n,m} \le \ell \cdot L_{1,n,m}$ it 
is not difficult to push the argument through; we omit the details.
(Since the $\ell-1$ largest components may not all be large, in 
the argument leading to Claim~\ref{cl:unique:mod} it may be convenient to 
use $L_{\gamma,n,m}$, defined as the sum of the $\ell-1$ largest 
components with size at least $\gamma n$.) 
\end{remark}

It might be surprising that the sol-gel interaction and 
condition~\eqref{eq:CE:Nk:change:WB} are used in Theorem~\ref{thm:tclb}, but 
not Theorem~\ref{thm:tcub} (rather than the other way round). 
Here the explanation is that our proofs proceed by contradiction, 
showing the existence of a gelating solution in case of Theorem~\ref{thm:tclb}, 
and a non-gelating solution in case of Theorem~\ref{thm:tcub}. 
Nevertheless, since condition~\eqref{eq:CE:Nk:change:WB} essentially ensures 
that the giant component, once it emerges, evolves in a regular way, it may 
well not be needed in Theorem~\ref{thm:tclb}.

\end{document}